\documentclass[11pt]{amsart}
\usepackage[margin=2.7cm, marginpar=1cm]{geometry}

\usepackage[T1]{fontenc}
\usepackage{comment}
\usepackage{appendix}
\usepackage{graphicx}
\usepackage{pdfpages}
\usepackage{nicematrix}
\newtheorem{theorem}{Theorem}[section]
\theoremstyle{definition}

\theoremstyle{remark}
\newtheorem{remark}[theorem]{Remark}
\usepackage{subcaption}
\usepackage{hyperref}
\usepackage[font=small]{caption}

\usepackage{epic}
\usepackage{import}
\usepackage{pdfpages}

\theoremstyle{plain}
\newtheorem{thm}{Theorem}

\newtheorem{cor}[thm]{Corollary}

\theoremstyle{definition}

\newtheorem{conj}[thm]{Conjecture}

\title{On the Legendrian invariant in knot lattice homology}
\author{Sarah Zampa}
\date{}

\newcommand{\F}{\mathbb{F}}
\newcommand{\Z}{\mathbb{Z}}
\newcommand{\PP}{\mathbb{P}}
\newcommand{\Q}{\mathbb{Q}}
\newcommand{\ie}{\textit{i}.\textit{e}., }
\newcommand{\CFlattice}{\mathbb{CF}}
\newcommand{\CFKlattice}{\mathbb{CFK}}
\newcommand{\HFlattice}{\mathbb{HF}}
\newcommand{\HFKlattice}{\mathbb{HFK}}
\newcommand{\gammavo}{\Gamma_{v_0}}
\newcommand{\Gvo}{G_{v_0}}
\newcommand{\leg}{\mathcal{L}(L)}

\DeclareMathOperator{\Vertgraph}{Vert}
\DeclareMathOperator{\gr}{gr}
\DeclareMathOperator{\Char}{Char}

\DeclareMathOperator{\HF}{HF}

\DeclareMathOperator{\HFhat}{\widehat{HF}}

\begin{document}

\begin{abstract}
The Ozsv\'{a}th-Szab\'{o} contact invariant $c^+(\xi)\in\mathrm{HF}^+(-Y)$ of the link of a normal surface singularity equipped with its canonical contact structure $(Y,\xi)$ was transposed to lattice homology theory by Bodn\'{a}r-Plamenevskaya. When considering a transverse algebraic knot $L$ in the link, the chain complex computing $\mathrm{HF}^+(-Y)$ can be equipped with an Alexander grading, and we can define an element $\mathcal{L}(L)$ in the bigraded theory $\mathrm{HFK}^+(-Y,L)$, which maps to the contact element by forgetting the filtration. We show that the Alexander grading (as defined by Ozsv\'{a}th-Stipsicz-Szab\'{o}) of this element is invariant under all blow-ups of the underlying plumbing graph. Furthermore, we utilize the fact that for specific types of blow-ups, the resulting lattice chain complexes are filtered chain homotopic and the chains maps map this element in one chain complex to the other, thereby providing a partial combinatorial description of the Legendrian invariant.
\end{abstract}

\maketitle

\section{Introduction}

Introduced by Ozsv\'{a}th and Szab\'{o} in \cite{OS04}, Heegaard Floer homology provides powerful invariants for $3$-manifolds and knots, particularly in the study of contact and Legendrian topology. A combinatorial counterpart to this theory known as \emph{lattice homology} was developed by N\'{e}methi (see \cite{Ne05,Ne08}) for links of isolated surface singularities. In this geometric setting, a resolution of the singularity is encoded by a weighted graph $G$ called a \emph{plumbing graph}, whose vertices correspond to the irreducible exceptional components of a good resolution, and where an edge is placed between two transverse intersecting curves. To each vertex, an integer weight corresponding to the self-intersection of the considered component is associated. The $4$-manifold $X_G$ is obtained by plumbing disk bundles over surfaces of genus $g$ as instructed by the graph, and the $3$-manifold $Y$ is the boundary of $X_G$. We will restrict ourselves to the case of rational homology spheres, which is equivalent to the plumbing graph being a disjoint union of trees of spheres.
Note that in the case of a link of a normal surface singularity, the plumbing graph is connected. In the following, we will consider more general cases, when the graph is a union of connected components, which at the level of $3$-manifolds corresponds to the connected sum.

The plumbing graph is not determined by the singularity, it is an invariant of the resolution of the singularity only up to some moves called \emph{blow-ups} and \emph{blow-downs}. These moves correspond to taking the connected sum of $X_G$ with $\overline{\mathbb{CP}}^2$ for the blow-up, and the blow-down corresponds to the inverse operation. Note that these operations do not affect the boundary of the $4$-manifold and so leave $Y$ unchanged.

Bodn\'{a}r and Plamenevskaya in \cite{BP21} successfully transposed the contact invariant $c^+(\xi)$ into the lattice setting. In this paper, we extend their work to the Legendrian/transverse context. We consider a plumbing graph $\gammavo$ with a distinguished vertex $v_0$ representing a transverse knot $L \subset (Y,\xi)$. Our goal is to define the Legendrian invariant $\leg \in \HFKlattice^-(\gammavo)$ combinatorially.

\begin{thm}
\label{thm:mainthm}
    The Alexander grading of the element $\leg \in \HFKlattice^-(\gammavo)$ is invariant under all five types of blow-ups depicted in Figure \ref{fig:blowups-general}.
\end{thm}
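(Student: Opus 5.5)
We first pin down $\leg$ concretely. Following Bodn\'{a}r--Plamenevskaya, the contact element $c^+(\xi)$ is represented in lattice homology by the class of the canonical characteristic vector $k_{can}$, characterized by $k_{can}(v)=-e_v-2$ for every vertex $v$ (adjunction for spheres), taken at the lowest $U$-power where it survives. In the knot setting, with distinguished vertex $v_0$ carrying the transverse knot $L$, we take $\leg$ to be the class of the $0$-cube $[k_{can},\emptyset]$ computed in the graph $\gammavo$, that is, with $v_0$ deleted for the weight function while $k_{can}$ still pairs with $v_0$. The Ozsv\'{a}th--Stipsicz--Szab\'{o} Alexander grading of a generator $[k,E]$ is an explicit expression in $k$, the weights, and the column of the inverse intersection matrix corresponding to $v_0$. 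Concretely it has the form
\[
A([k,E]) = \tfrac12\bigl(\langle k, \Sigma^*\rangle + \Sigma^*\cdot\Sigma^*\bigr)+\text{(cube correction)},
\]
where $\Sigma^*$ is the rational dual class of $v_0$, i.e.\ the solution of $\Sigma^*\cdot v = \delta_{v,v_0}$ on $G$. We use the form of this grading stated earlier in the paper. Since $E=\emptyset$, only $k_{can}$ and $\Sigma^*$ enter, so $A(\leg)$ is a closed formula in the plumbing data. Proving the theorem then amounts to showing that this formula does not change under each of the five blow-ups.

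We handle each blow-up of Figure \ref{fig:blowups-general} separately, using a uniform bookkeeping. Let $G'$ be obtained from $G$ by adding a $(-1)$-vertex $e$. There are three ways this can happen: $e$ is attached to a single vertex $w$, whose weight drops by one; $e$ is inserted on an edge $w_1w_2$, with both weights dropping by one; or $e$ involves $v_0$ itself, by being adjacent to $v_0$ or inserted on an edge at $v_0$. In the last case the distinguished vertex and its multiplicity may change. Write $\pi\colon H_2(X_{G'})\to H_2(X_G)$ for the blow-down map and $\pi^*$ for the pullback of classes. For the canonical class we have $k'_{can}=\pi^*k_{can}+e^*$, which is the usual transformation $K'=\pi^*K+E$. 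A direct check against the adjunction values verifies this on the new vertex and on its neighbours. For the dual class, when $e$ is not adjacent to $v_0$ we have $\Sigma'^*=\pi^*\Sigma^*$. When $e$ is adjacent to $v_0$, the knot is either transported to the new vertex or its dual acquires an $e$-component, and we compute $\Sigma'^*=\pi^*\Sigma^*+c\,e^*$ for the appropriate $c\in\{0,\pm1\}$.

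Substituting into the grading formula gives the change
\[
\Delta A=\tfrac12\bigl(c\,\langle e^*,e^*\rangle\cdot(\text{sign}) + c\,\langle \pi^*k_{can}, e^*\rangle + \langle e^*, \pi^*\Sigma^*\rangle + c^2 e^*\cdot e^*\bigr).
\]
Here we use $\pi^*x\cdot\pi^*y=x\cdot y$, $\pi^*x\cdot e^*=0$, and $e^*\cdot e^*=-1$. For blow-ups away from $v_0$ we have $c=0$, all cross terms vanish, and we get $\Delta A=0$. For the blow-ups touching $v_0$, the terms become $\tfrac12(c\langle k_{can}',e\rangle + c^2(-1))$. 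Since $k'_{can}(e)=-(-1)-2=-1$, this is $\tfrac12(-c-c^2)$ or a sign variant of it, and it must vanish once the normalization convention for the Alexander grading is fixed, for instance the symmetrization shift built into the OSS definition. Matching this normalization is where we expect care to be needed.

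The main obstacle is the cases where the blow-up meets $v_0$. There the vertex carrying the knot changes, for example to the new $(-1)$-curve in the ``blow-up at the intersection of $L$ with the exceptional divisor'' move, and the framing/normalization term of the Alexander grading, which involves $\Sigma^*\cdot\Sigma^*$ and the multiplicity, shifts. We will first try to compute $\Sigma'^*$ explicitly as a column of $(I')^{-1}$, using the block form of the inverse intersection matrix after one blow-up. We then check that the shift in $\tfrac12\Sigma^*\cdot\Sigma^*$ exactly cancels the shift in $\tfrac12\langle k_{can},\Sigma^*\rangle$. If the raw cancellation fails, we will recheck whether $\leg$ after blow-up should be represented by $k'_{can}$ or by $k'_{can}\pm 2e^*$, since these differ by a Legendrian stabilization, and we will choose the representative that is compatible with the chain homotopy equivalence of lattice complexes under blow-up.
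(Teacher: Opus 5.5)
\textbf{There is a genuine gap in types IV and V.} These are exactly the cases where something has to be checked, and there you stop at $\Delta A=\tfrac12(-c-c^2)$ with $c\in\{0,\pm1\}$ undetermined. You then assert that this ``must vanish once the normalization convention is fixed'', and propose, if it does not, to replace $k'_{can}$ by $k'_{can}\pm 2e^*$. This does not prove anything:
\begin{itemize}
\item For $c=1$ the expression equals $-1$, and you identify no normalization that removes it.
\item The fallback is not legitimate. $\leg$ on the blown-up graph is by definition the generator $[K_0,\emptyset]$ (the paper uses the anticanonical vector $K_0(v)=v^2+2$), so you cannot swap in a different characteristic vector to make the gradings agree.
\item You misread the moves. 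In types IV and V the distinguished vertex remains $v_0$, with its auxiliary framing lowered by one. The knot is never transported to the new $(-1)$-vertex.
\end{itemize}
Your outline for types I--III is fine.

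\textbf{The missing step is to determine $\Sigma'$.} The paper solves $v'_j\cdot\Sigma'=0$ case by case. It finds that the coefficient of the new vertex is $0$, $a_k$, $a_{k_1}+a_{k_2}$, $1$, $a_k+1$ for types I--V respectively, with all other coefficients unchanged. These are exactly the coordinates of the total transform $\pi^*\Sigma$ in the new vertex basis, because the proper transforms are $v'_k=v_k-e$ and $v'_0=v_0-e$. Indeed, $\pi^*\Sigma$ has $v'_0$-coefficient $1$ and is orthogonal to $e$ and to every $v'_j$ with $j\neq 0$. By uniqueness, $\Sigma'=\pi^*\Sigma$ in all five cases, so $c=0$ always, including when $e$ meets $v_0$. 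The $e$-component you anticipated exists only in the vertex basis, not relative to the splitting $\pi^*H_2(X_{\Gvo})\oplus\Z e$.

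\textbf{What this buys you.} With $c=0$, your framework gives a uniform argument that is tidier than the paper's five coordinate computations:
\begin{itemize}
\item $(\Sigma')^2=\Sigma^2$.
\item $L'_0=\pi^*L_0-e^*$ (or $+e^*$ for your sign convention $k_{can}=-K_0$), so $L'_0(\Sigma')=L_0(\Sigma)$ because $e\cdot\pi^*\Sigma=0$.
\end{itemize}

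\textbf{The real care point is the extension of $L$ at $v_0$, not $\Sigma$.} The transformation rule for $L'_0$ must also hold at $v'_0$. The paper's computation takes $L_0(v_0)=2+v_0^2$ and lowers the framing of $v_0$ by one, which makes the rule hold there. This is not a formality. The literal condition $a_{v_0}=b_{v_0}=0$ in the definition forces $L(v_0)=-v_0^2$ when $E=\emptyset$, and that value changes by $+1$ rather than $-1$ under these blow-ups. So you must state your convention at $v_0$ explicitly and check it together with your choice of sign for the class.

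\textbf{A minor point.} Your $\Sigma^*$, dual to $v_0$, equals $\Sigma/\Sigma^2$ rather than the paper's $\Sigma$. This is harmless for invariance, since $\Sigma^2$ is invariant, but your displayed formula is not the paper's Alexander grading.
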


While it is established that two plumbing graphs representing the same $3$-manifold $Y$ are related by a sequence of blow-ups/downs, it is yet to be proven that the same applies to the pair $(Y,L)$ where $L$ is a transverse knot in $(Y,\xi)$ represented by an unweighted vertex.

\begin{conj}
     Let $\gammavo$ and $\Gamma'_{v'_0}$ be two plumbing graphs representing the same triplet $(Y,\xi,L)$. Then, the two graphs are related by the blow-ups depicted in Figure \ref{fig:blowups-general}.
\end{conj}

Utilizing some filtered chain homotopies from \cite{OSS14a} between the filtered chain complexes of two different graphs representing the same pair $(Y,K)$, we obtain the immediate result:

\begin{cor}
   If the two graphs are related by blow-ups of type I, IV and V exclusively, then the element $\leg$ is an invariant of the triplet $(Y,\xi,L)$.
\end{cor}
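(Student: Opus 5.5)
The plan is to deduce the corollary from Theorem~\ref{thm:mainthm} together with the filtered chain homotopy equivalences of \cite{OSS14a}. Let $\gammavo$ and $\Gamma'_{v'_0}$ be two plumbing graphs for $(Y,\xi,L)$ that differ by a finite sequence of blow-ups and blow-downs of types I, IV and V only. By induction on the length of the sequence it suffices to handle one blow-up $\gammavo \to \Gamma'_{v'_0}$ of one of these types. A blow-down is the same move read backwards, and an isomorphism in homology is invertible, so it needs no separate treatment.

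For a single blow-up of type I, IV or V, I would take the explicit maps constructed in \cite{OSS14a}. These are filtered chain maps $\Phi\colon \CFKlattice^-(\gammavo)\to \CFKlattice^-(\Gamma'_{v'_0})$ and $\Psi$ in the other direction, homotopy inverse to each other through filtered homotopies. On lattice points they act by extending a characteristic vector $k$ on the old vertices to the new vertex $e$ through a prescribed value $k(e)$, such as $\pm1$, and by sending a cube $(k,I)$ to $(k',I)$ or $(k',I\cup\{e\})$.

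The key step is to check that $\Phi$ sends the chain representing $\leg$ to a chain representing $\mathcal{L}(L)$ for the new graph, at least up to a boundary. Following \cite{BP21}, the contact element is represented by the characteristic vector $k_{can}$ coming from the canonical class, that is, $k_{can}(v)=-v\cdot v-2$ on each vertex. The Legendrian element $\leg$ is this same generator, now viewed in the knot filtered complex, in the appropriate $U$-power and Alexander level. After a blow-up, the canonical class of the new graph restricts to the old one on the old vertices and takes the value $-1$ on the new $(-1)$-vertex, and the adjunction relations are preserved. So I expect $\Phi(k_{can})=k'_{can}$ directly from the formula for $\Phi$ in these three cases, possibly with extra terms that are of higher filtration level or are boundaries.

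Because $\Phi$ is a filtered quasi-isomorphism, it respects the Maslov grading and sends the Alexander filtration to itself, up to the grading shift already accounted for in Theorem~\ref{thm:mainthm}. Therefore the identity at the chain level passes to $\HFKlattice^-$, and the class $\leg$ is carried to $\mathcal{L}(L)$ for $\Gamma'_{v'_0}$. Composing along the sequence then gives invariance of the triplet $(Y,\xi,L)$.

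The main obstacle is to verify that the specific maps of \cite{OSS14a} really send $k_{can}$ to $k'_{can}$, rather than to some other vector extending $k_{can}$, for each of types I, IV and V. This needs a case check of which extension value at $e$ the maps select, and in particular how the new vertex sits relative to $v_0$. It is also the reason types II and III are excluded: there the homotopies presumably move the canonical generator off itself, and only the Alexander grading statement of Theorem~\ref{thm:mainthm} survives. I would first write out $\Phi$ on the generator for each type and compare it with the canonical class of the blown-up graph. If a discrepancy appears, I would try to show that it is a boundary $\partial(k',\{e\})$ lying in the same filtration level.
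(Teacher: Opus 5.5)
Your outline is the paper's. In the remark after the theorem, types I and IV are handled by noting that the master complexes of $\Gamma^d_{v_0}$, $\Gamma^+_{v_0}$ and $\gammavo$ agree, and type V by the filtered homotopy equivalences in the appendix of \cite{OSS14a}. What remains is to show that these maps carry the distinguished generator to the distinguished generator in the same Alexander level. That is the only non-formal step, and you leave it as an expectation. Your heuristic for it is also inaccurate for type V. There $v_k$ loses one from its framing, so $k'_{can}(v'_k)=k_{can}(v_k)+1$, and the old class is recovered only on the total transform $v'_k+e$. The map has to be read off \cite{OSS14a}. Note also that a filtered map can only add terms of lower filtration, not higher.

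The more serious problem is which generator you track, and your appeal to Theorem~\ref{thm:mainthm}. The paper's $\leg$ is the anticanonical generator $[K_0,\emptyset]$ with $K_0(v)=v^2+2$, which is $+1$ on the new vertex. Your $k_{can}$ is its conjugate, equal to $-1$ there, and the filtration tells them apart.

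\begin{itemize}
\item With $A$ as defined in Section~\ref{sec:background}, for $E=\emptyset$ the condition $a_{v_0}=b_{v_0}=0$ forces $L(v_0)=-v_0^2$. Hence $A([K,\emptyset])=\tfrac12\bigl(\sum_{j\ge 1}a_jK(v_j)+\sum_{v_j\sim v_0}a_j\bigr)$.
\item For the canonical or anticanonical class, a type IV or V blow-up changes this by $\tfrac12(K'(e)+1)$. That is $0$ for $k_{can}$ and $1$ for $K_0$.
\item Concretely, blowing up the unknot $\gammavo=\{v_0\}$ at $v_0$ gives $G'=\{e\}$, with $A([\pm1,\emptyset])=\tfrac12(1\pm1)$, $A([1,\{e\}])=1$ and $\partial[1,\{e\}]=[1,\emptyset]+[-1,\emptyset]$. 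So $[1,\emptyset]$ is a boundary in the associated graded, while $[-1,\emptyset]$ generates $\HFKlattice^-$ in Alexander grading $0$.
\end{itemize}

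Theorem~\ref{thm:mainthm}, as proved, concerns $K_0$ and evaluates $L_0(v_0)=2+v_0^2$, so it cannot be quoted for your element. You need to do three things:
\begin{itemize}
\item fix the generator;
\item prove its Alexander invariance with the prescribed extension (the formula above does this for $k_{can}$);
\item check the maps on that generator.
\end{itemize}
For types I and IV, where $G'=G\sqcup\{e\}$, the identification is $x\mapsto x\otimes[-1,\emptyset]$, which sends $[k_{can},\emptyset]$ to $[k'_{can},\emptyset]$. Type V still needs the explicit maps of \cite{OSS14a}.
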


We conjecture that the element $\leg$ is invariant under all types of blow-ups.

The paper is organized as follows. In Section \ref{sec:background}, we review the technical construction of N\'{e}methi's lattice homology, the specific combinatorial moves of the blow-up, the definition of the knot lattice complex with its Alexander grading, and the contact invariant; we also define $\leg$. Section \ref{sec:alexandergrading} is dedicated to the proof of invariance of the Alexander grading under blow-ups, as well as discuss the filtered chain homotopies that exist for three of the blow-up types.

\vspace{5mm}
\noindent\textbf{Acknowledgements:} The author wishes to thank Andr\'{a}s Stipsicz for his help and guidance. The author was partially supported by the ERC Advanced Grant KnotSurf4d and by the NKFIH Grant K146401.

\section{Background material}
\label{sec:background}

We review the basics of N\'{e}methi's lattice homology \cite{Ne05,Ne08} and the knot lattice viewpoint \cite{OSS14a}. In the following, we only consider negative definite plumbing trees of spheres, or equivalently singularities with rational homology sphere links.

\subsection{Plumbing graphs}

Let $G$ be a plumbing graph associated with a good resolution of a normal surface singularity. We will always have that $G$ is a union of \emph{negative definite trees}. The vertices $v\in\Vertgraph(G)$ are labeled by integer weights denoted by $v^2$ called \emph{framings}.

To study knots, we consider a decorated graph $\gammavo$ where all vertices are framed except for a distinguished vertex $v_0$. This vertex corresponds to a transverse knot $L$ in the contact $3$-manifold $(Y,\xi)$ where $\xi$ is the canonical contact structure. We denote by $G_{v_0}$ the framed graph obtained from $\gammavo$ by adding a specific framing to $v_0$.

\subsection{Blow-up operations}

We focus on the blow-up operation, which corresponds to the connected sum with $\overline{\mathbb{CP}}^2$. There are several combinatorial types of blow-ups depending on the location of the operation.

\begin{figure}%[ht]
     \centering
     \begin{subfigure}[b]{0.5\textwidth}
         \centering
         \includegraphics[width=\textwidth]{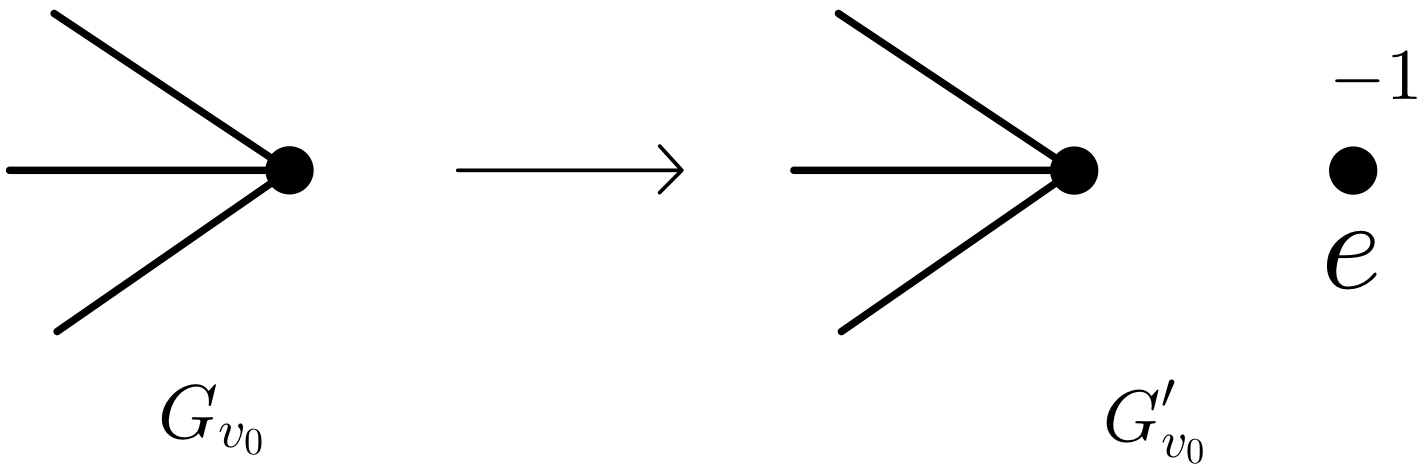}
         \caption{Connected sum with $S^3$.}
        \label{fig:blowupI}
     \end{subfigure}
     \hfill
     \begin{subfigure}[b]{0.5\textwidth}
         \centering
         \includegraphics[width=\textwidth]{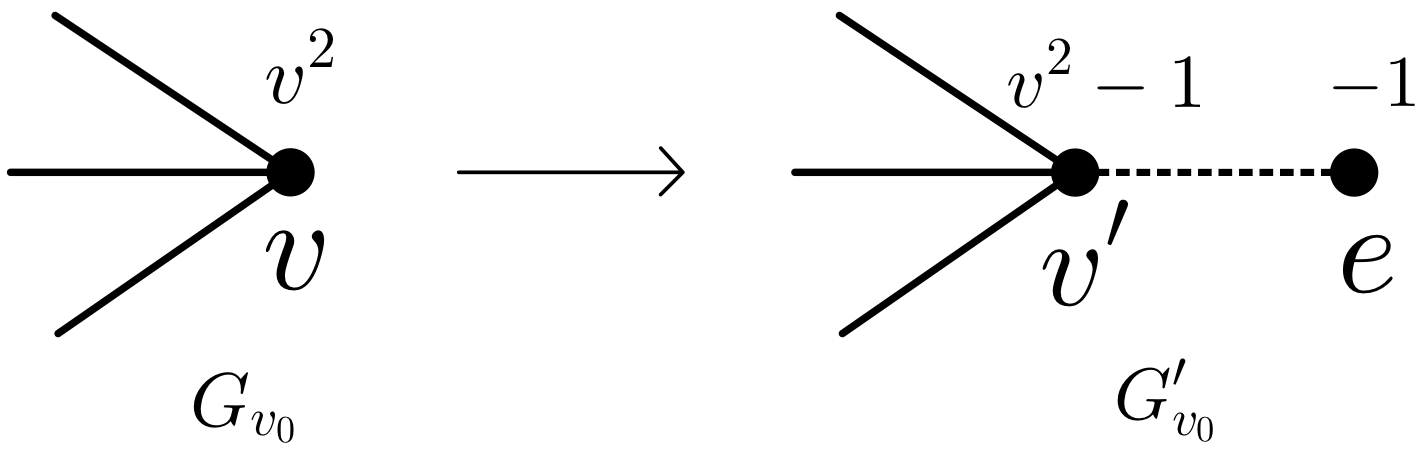}
         \caption{Blow-up of a weighted vertex.}
        \label{fig:blowupII}
     \end{subfigure}
     \hfill
     \begin{subfigure}[b]{0.79\textwidth}
         \centering
         \includegraphics[width=\textwidth]{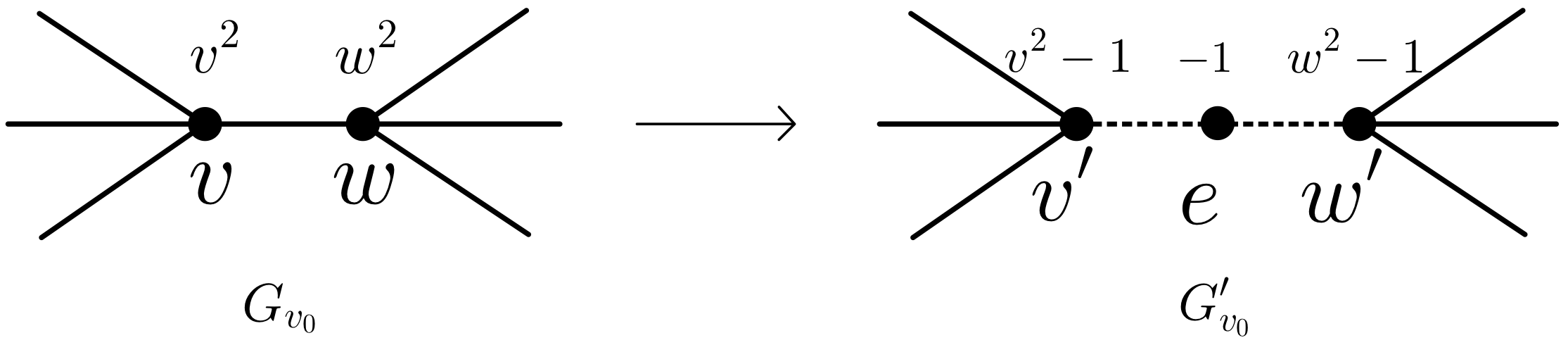}
         \caption{Blow-up of an edge between two weighted vertices.}
        \label{fig:blowupIII}
     \end{subfigure}
     \hfill
     \begin{subfigure}[b]{0.65\textwidth}
         \centering
         \includegraphics[width=\textwidth]{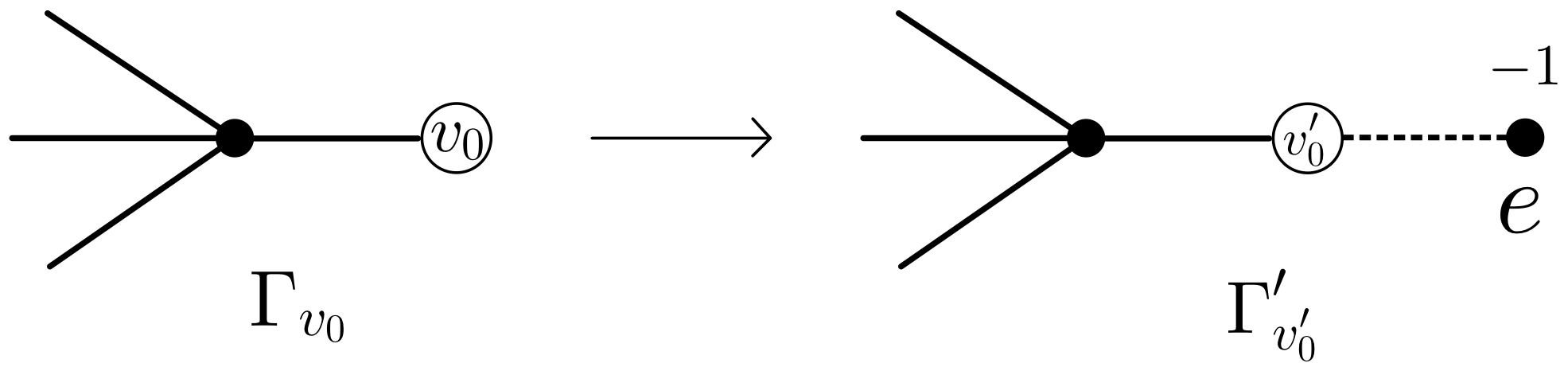}
         \caption{Blow-up of the unweighted vertex.}
        \label{fig:blowupIV}
     \end{subfigure}
     \hfill
     \begin{subfigure}[b]{0.65\textwidth}
         \centering
         \includegraphics[width=\textwidth]{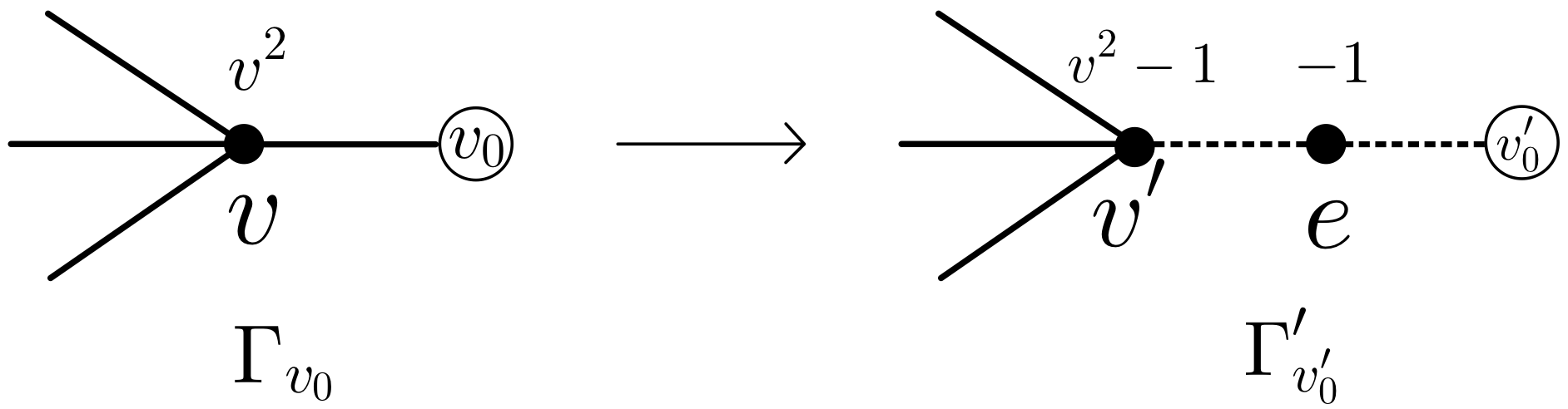}
         \caption{Blow-up of an edge between a weighted vertex and the unweighted vertex.}
        \label{fig:blowupV}
     \end{subfigure}
        \caption{The five different types of blow-ups.}
        \label{fig:blowups-general}
\end{figure}

First, we can consider blowing-up away from the curves as depicted in Figure \ref{fig:blowupI}, in which case the resulting graph is the disjoint union of the original graph together with a new vertex $e$ with weight $-1$; this type of blow-up is usually called a \emph{generic} blow-up. We can then consider blowing-up at a single curve, which we distinguish in two cases, as shown in Figures \ref{fig:blowupII} and \ref{fig:blowupIV}. Either we blow-up a curve corresponding to a weighted vertex $v$, in which case we add an edge between $v$ and the new vertex $e$, and lower the framing $v^2$ by one. Or we can blow-up the unique curve without a framing, in which case we only add the edge. Finally, we can blow-up at the intersection between two curves, which corresponds to blowing-up an edge between two vertices. We can either do so between two weighted vertices $v$ and $w$ as in Figure \ref{fig:blowupIII}, in which case both framings are lowered by $1$, or we can do so on an edge incident to the unweighted vertex as in Figure \ref{fig:blowupV}, in which case we lower the framing of the weighted vertex $v$.

\subsection{Lattice homology}

Let $X_G$ be the $4$-manifold associated to $G$. The lattice chain complex is defined as
\[
\CFlattice^\infty(G)\cong\F[U,U^{-1}]\langle \Char(G)\times \PP(\Vertgraph(G)) \rangle,
\]
where $\Char(G)=\{K:H_2(X_G;\Z)\rightarrow\mathbb{Z}\, \vert \,K(x)\equiv x\cdot x \pmod{2}\}$ is the set of characteristic cohomology elements of $H^2(X_G;\Z)$. This means that generators are pairs $[K,E]$ with $K\in \Char(G)$ and $E \subset \Vertgraph(G)$.
By abuse of notation, we will denote by $v\in H_2(X_G;\mathbb{Z})$ the homology class corresponding to the vertex $v\in\Vertgraph(G)$, and by $v^*\in H^2(X_G,Y;\mathbb{Z})$ its Poincaré dual.
The lattice chain complex $\CFlattice^\infty (G)$ splits according to the Spin$^c$ structures on $Y_G =\partial X_G$:
\[
\CFlattice^\infty (G)= \sum_{\mathbf{s}\in Spin^c(Y_G)} \CFlattice^\infty (G,\mathbf{s})
\]
Let us define a map $\partial$ on the chain complex. First, we define the function $g:\Char(G)\rightarrow\mathbb{N}$, called the \textit{minimal $G$-weight}, as $g([K,E])=\min\{f([K,I]) \,\vert\, I\subset E\}$, where $$2f([K,I])= \left(\sum_{v\in I} K(v)\right)+\left(\sum_{v\in I}v\right)\cdot\left(\sum_{v\in I}v\right).$$
Now, we can define the map
\[
\partial [K,E]=\sum_{v\in E} U^{a_v [K,E]} \otimes [K,E-v] + \sum_{v\in E} U^{b_v [K,E]} \otimes [K+2v^*,E-v]
\]
where $a_v [K,E]=g([K,E-v])-g([K,E])$ and $b_v [K,E]=\frac{1}{2}(K(v)+v^2)+g([K+2v^*,E-v])-g([K,E])$. It is a boundary map (\ie $\partial\circ\partial=0$), see \cite[Lemma 2.2]{OSS14a}.

The chain complex $(\CFlattice^-(G),\partial)$ is then defined to be the subcomplex of $(\CFlattice^\infty(G),\partial)$ for which the generators $U^j\otimes [K,E]$ are such that $j\geq 0$.
In particular, the infinity complex is a module over $\mathbb{F}[U,U^{-1}]$, while the minus version is a module over $\mathbb{F}[U]$.

There is a natural grading given by the exponent of $U$ in the expression of an element, and another grading $\gr : \CFlattice^\infty(G) \longrightarrow \Q$ called the \textit{Maslov grading}, defined as
\[
\gr(U^j\otimes [K,E])=-2j+2g([K,E])+\vert E\vert +\frac{1}{4}(K^2+\vert \Vertgraph(G)\vert),
\]
where $K^2$ is defined as the square of $nK\in H^2(X_G,Y;\mathbb{Z})$ divided by $n^2$, and $n=\vert H_1(X;\mathbb{Z})\vert$.
The lattice homology $\HFlattice^\infty(G)$ is defined to be the homology of the chain complex $(\CFlattice^\infty(G),\partial)$ (and $\HFlattice^-(G)$ is defined in the same manner).

\subsection{Knot lattice homology}

In order to define the knot lattice chain complexes, we need to introduce a filtration. For a generator $[K,E]$, let
$A$ be the \textit{Alexander grading} defined by
\[
A([K,E])=\frac{1}{2}(L(\Sigma)+\Sigma^2),
\]
where $L=L_{[K,E]}\in \Char(\Gvo)$ is the unique element such that
\begin{itemize}
    \item $L\vert_G=K$ and
    \item $a_{v_0}[L,E\cup\{v_0\}]=b_{v_0}([L,E\cup\{v_0\}])=0$,
\end{itemize}
and $\Sigma=v_0+\sum_{j=1}^n a_j\cdot v_j$ is the homology class in $H_2(X_{\Gvo};\Q)$ defined by $v_j\cdot\Sigma=0$ for all $j\neq 0$. This grading is extended to elements of the form $U^j\otimes[K,E]$ by
\[
A(U^j\otimes[K,E])=-j+A([K,E])
\]
where $j\in\mathbb{Z}$.

This grading naturally induces a filtration $\{\mathcal{F}_i\}$ on $\CFlattice^\infty(G)$; by intersecting it with $\CFlattice^-(G)$, we also get a filtration $A$ on this subcomplex. Together with this filtration, the subcomplex $(\CFlattice^-(G), \partial, A)$ is a filtered chain complex.
The homology of its associated graded object defines the knot lattice homology $\HFKlattice^-(\gammavo)$ (\ie $\CFKlattice^-(G)$ is $\CFlattice^-(G)$ together with the filtration induced by $A$).
The associated graded object for $(\CFlattice^-(G), \partial, A)$ is the graded object that is exactly $\mathcal{F}_i/\mathcal{F}_{i-1}$ in degree $i$.
This means that the boundary map respects the filtration induced by $A$.

\subsection{The connected sum formula}
\label{subsec:connsum}

A tool which will prove to be important later on is to consider the connected sum of two graphs with distinguished vertices. Namely, if $\gammavo$ and $\Gamma'_{w_0}$ are two graphs with unweighted vertices $v_0$ and $w_0$, we can consider their disjoint union, with the identification $v_0=w_0$. The new graph is denoted by $\Delta_{(v_0=w_0)}$, and if $G=\gammavo-v_0$ and $G'=\Gamma'_{w_0}-w_0$, the Alexander grading of an element $[K_1,E_1]\otimes[K_2,E_2]\in\CFlattice^\infty(G\cup G')$ satisfies $$A_\#([K_1,E_1]\otimes[K_2,E_2])=A_{v_0}([K_1,E_1])+A_{w_0}([K_2,E_2]),$$where the different Alexander gradings refer to the gradings associated to the different graphs.

\subsection{The contact invariant}

In Heegaard Floer theory, the \emph{contact invariant} $c(\xi)$ is a distinguished element living in the homology group $\HFhat(-Y)$ (or the plus version $\HF^+(-Y)$) of the closed oriented $3$-manifold $Y$ equipped with a contact structure $\xi$.
These groups are invariants of a closed oriented $3$-manifold defined from a Heegaard diagram of the orientation-reversed manifold $-Y$, and come in several flavors depending on the ring on which they are defined.
First defined by Ozsv\'{a}th and Szab\'{o} in \cite{OS02}, $c(\xi)$ is constructed using an open book decomposition compatible with $\xi$. It has been reformulated by Honda, Kazez and Mati\'{c} in \cite{HKM07} as follows.
Concretely, we start with an open book decomposition and consider the associated Heegaard diagram for $Y$; the generator is the unique intersection point between some specific curves that lie in a position dictated by the monodromy.
We can consider the homology class of this cycle, which depends only on the isotopy class of $\xi$.
Its most fundamental property is that it detects tightness:
\begin{thm}[Ozsv\'{a}th-Szab\'{o}, \cite{OS02}]%Theorem 1.4
    If $c(\xi)\neq 0$, then the contact structure $\xi$ is tight.
\end{thm}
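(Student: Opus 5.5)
The plan is to prove the contrapositive: if $\xi$ is overtwisted, then $c(\xi)=0$. I would work entirely in the Honda--Kazez--Mati\'{c} reformulation recalled above. There, $c(\xi)$ is represented by the distinguished intersection point $\mathbf{x}$ in a Heegaard diagram built from an open book compatible with $\xi$ and a basis of arcs on a page. Since the homology class of $\mathbf{x}$ depends only on the isotopy class of $\xi$, we are free to choose a convenient open book and a convenient arc basis. The aim is to choose them so that $\mathbf{x}$ is visibly a boundary in $\widehat{CF}(-Y)$ (and hence also in the plus version).

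\textbf{Step 1: choose an open book with non-right-veering monodromy.} By Giroux's correspondence, $\xi$ is supported by some open book. I would invoke the theorem of Honda--Kazez--Mati\'{c} that a contact structure is tight if and only if every compatible open book has right-veering monodromy. Since $\xi$ is overtwisted, some compatible open book $(S,h)$ has a properly embedded arc $a\subset S$ such that $h(a)$ lies strictly to the left of $a$ at one of its endpoints. An equivalent route, and the one in the original Ozsv\'{a}th--Szab\'{o} argument, is to use an open book containing a negative stabilization. I would keep that in reserve as a fallback.

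\textbf{Step 2: build the diagram and produce a bigon.} Extend $a=a_1$ to a basis $a_1,\dots,a_k$ of $S$. Form the Heegaard diagram on $S_{1/2}\cup -S_0$ with $\beta$-curves coming from $a_i\cup a_i$ and $\alpha$-curves coming from $b_i\cup h(b_i)$, where the $b_i$ are small pushoffs of the $a_i$. Let $\mathbf{x}=(x_1,\dots,x_k)$ be the contact generator, with $x_i\in a_i\cap b_i$ on $S_{1/2}$. Because $h(a_1)$ veers to the left of $a_1$ at the endpoint near $x_1$, there is an extra intersection point $x_1'\in \alpha_1\cap\beta_1$ near the boundary. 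Between $x_1'$ and $x_1$ there is a thin embedded bigon $D$ lying in the page $S_0$ side. Set $\mathbf{y}=(x_1',x_2,\dots,x_k)$. I would check that $D$ avoids the basepoint, which sits in the region adjacent to $\partial S$ on the page $S_{1/2}$, and that $D$ is a Maslov index one domain from $\mathbf{y}$ to $\mathbf{x}$. By the Riemann mapping theorem, $D$ then contributes exactly one holomorphic disk.

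\textbf{Step 3: show $\partial\mathbf{y}=\mathbf{x}$.} This is the main obstacle: ruling out every other Maslov index one positive domain out of $\mathbf{y}$. The key input is the standard fact from the Honda--Kazez--Mati\'{c} construction that $\mathbf{x}$ is the unique generator lying entirely in $S_{1/2}$. Every positive domain with a corner at a point of $S_{1/2}$ is forced to hit the basepoint region, except for the thin bigon $D$. I would first redo this region-by-region positivity analysis for domains starting at $\mathbf{y}$: one shows that any positive domain leaving $x_1'$ must cover the corner region at $\partial S$ unless it is $D$. From this I expect to conclude that $\partial\mathbf{y}=\mathbf{x}$, where $\partial$ is the Heegaard Floer differential (not the lattice one). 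Hence $c(\xi)=[\mathbf{x}]=0$.

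If this uniqueness analysis becomes unwieldy, I would switch to the fallback. Reduce by a negative stabilization to the statement that $\xi$ is a connected sum $\xi'\#\xi_{ot}$ with an overtwisted structure on $S^3$. Then use the connected sum formula $c(\xi'\#\xi_{ot})=c(\xi')\otimes c(\xi_{ot})$, and check directly on a genus-one diagram for the negatively stabilized annulus open book that $c(\xi_{ot})=0$.
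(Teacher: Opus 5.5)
The paper gives no proof of this statement; it is quoted from Ozsv\'ath--Szab\'o, so your proposal has to stand on its own. Your main route is the Honda--Kazez--Mati\'c argument, and your fallback is essentially the original Ozsv\'ath--Szab\'o one (connected sum formula plus vanishing for an overtwisted structure on $S^3$). Both strategies are viable, but Step~3 of the main route does not go through as you describe it. The input you single out, that $\mathbf{x}$ is the only generator in $S_{1/2}$, is not what does the work. The assertion that every positive domain leaving $x_1'$ other than $D$ must cover the region at $\partial S$ (hence $z$) is also not true in general.

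What the local picture gives is the following.
\begin{itemize}
\item At each $x_i$ the two \emph{outgoing} quadrants lie in the basepoint region. Incoming corners in $S_{1/2}$ are harmless; $D$ has one at $x_1$.
\item Hence a positive index one domain $\phi$ out of $\mathbf{y}$ with $n_z(\phi)=0$ vanishes near $x_2,\dots,x_k$. It ends at some $(w_1,x_2,\dots,x_k)$ and has boundary on $\alpha_1\cup\beta_1$.
\item At $x_1'$ the two quadrants adjacent to the corner of $D$ run out to $\partial S$ outside the thin strip, so they lie in the basepoint region. Thus $\phi$ leaves $x_1'$ either through the corner of $D$ or through the quadrant opposite to $D$.
\item In the first case, the corner condition at $x_1$ makes $\phi-D$ a periodic domain with vanishing boundary coefficients and $n_z=0$, so $\phi=D$.
\end{itemize}
The opposite quadrant points into the interior of $-S_0$ and is in general not in the basepoint region. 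Two kinds of domains can leave through it: a bigon between $a_1$ and $h(b_1)$ with corners $x_1'$ and another point $w_1$, or a domain that returns to $x_1$ through the thin strip at the other end of $a_1$. No basepoint consideration excludes them, and either one would spoil $\partial\mathbf{y}=\mathbf{x}$.

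The missing idea is to isotope $h$ rel $\partial S$ so that $h(b_1)$ and $a_1$ are in minimal position, say geodesic for a hyperbolic metric with geodesic boundary. The non-constant component of a holomorphic representative is a disk with two corners. Slide the part of its boundary lying in a thin strip of $S_{1/2}$ onto $\partial S$. This gives a null-homotopic loop in $S$ made of an arc of $a_1$, an arc of $h(b_1)$ and possibly a short segment of $\partial S$; lift it to the universal cover.
\begin{itemize}
\item In the bigon case, a lift of $a_1$ and a lift of $h(b_1)$ would meet twice, which minimal position forbids.
\item In the wrap-around case, the lift of $h(b_1)$ through $\tilde x_1'$ would start next to one endpoint of $\tilde a_1$ and end next to the other, exactly like the lift of $b_1$. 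So $h(b_1)\simeq b_1$, hence $h(a_1)\simeq a_1$, rel endpoints, contradicting the choice of $a_1$.
\end{itemize}
With this added, the main route closes. A minor point: $D$ does not lie on the $S_0$ side; it is the thin strip from $x_1$ to $\partial S$ in $S_{1/2}$ glued to a tiny triangle in $-S_0$.

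Your fallback sidesteps all of this, but ``reduce by a negative stabilization'' hides its real input. You need that every overtwisted $\xi$ is isotopic to $\xi'\#\xi_-$, where $\xi_-$ is the structure on $S^3$ supported by the negative Hopf band. This follows from Eliashberg's classification: take $\xi'$ overtwisted in the homotopy class for which $\xi'\#\xi_-$ is homotopic to $\xi$. After that, the connected sum formula and $c(\xi_-)=0$ finish the proof.
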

Moreover, the contact invariant is non-vanishing if $\xi$ is induced by a Stein filling, and lies in a specific grading, which makes it for a good tool to study fillability.

In the setting of the Lisca-Ozsv\'{a}th-Stipsicz-Szab\'{o} construction \cite{LOSS}, the same open book decomposition gives a more refined invariant for a null-homologous Legendrian knot $L$. Viewing the knot on a page, we can form the compatible doubly pointed Heegaard diagram and take the generator associated with the arcs in the knot Floer complex. The homology class defines the Legendrian invariant, and after forgetting the filtration we get back $c(\xi)$.

Focusing now on links of surface singularities, the computations of Heegaard Floer homology can simplify with the help of lattice homology, since it is a purely combinatorial homology theory. Notably, Bodn\'{a}r-Plamenevskaya have in \cite{BP21} translated the contact invariant to this combinatorial point of view. They have shown that the invariant can be viewed as a specific cycle in the lattice chain complex, namely the \emph{anticanonical class} $K_0\in\Char(G)$, which acts on elements of $\Vertgraph(G)$ by
\[
K_0(v)=v^2+2
\]
and derives its name from the fact that $K_0=-K_{can}$, where the canonical class $K_{can}$ is defined by N\'{e}methi in \cite{Ne05} to be the first Chern class of the canonical line bundle.

\section{Invariance of the Alexander grading under blow-ups}
\label{sec:alexandergrading}

We are now ready to show that the Alexander grading of the anticanonical class $K_0=[K_0,\emptyset]$ is invariant under blow-ups, for each of the five types of blow-ups described previously.
In the following, we will use the fact that $\Sigma^2=v_0\cdot\Sigma$.
\begin{thm}
    The Alexander grading of the anticanonical class $K_0=[K_0,\emptyset]$ is invariant under blow-ups.
\end{thm}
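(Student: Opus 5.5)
The plan is to first turn the definition of $A([K_0,\emptyset])$ into a closed formula that involves only $K_0$, $\Sigma$ and $v_0$, and then to compare that formula before and after each of the five blow-ups.

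\emph{Step 1: closed formula.} Take $E=\emptyset$, so $E\cup\{v_0\}=\{v_0\}$. Then $g([L,\{v_0\}])=\min\{0,\tfrac12(L(v_0)+v_0^2)\}$ and $g([L+2v_0^*,\emptyset])=0$.
\begin{itemize}
\item The condition $b_{v_0}=0$ gives $\tfrac12(L(v_0)+v_0^2)-g([L,\{v_0\}])=0$. This forces $\tfrac12(L(v_0)+v_0^2)\le 0$.
\item The condition $a_{v_0}=0$ forces $g([L,\{v_0\}])=0$.
\end{itemize}
Together these give $L(v_0)=-v_0^2$, while $L|_G=K_0$.

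Write $\Sigma=v_0+\Sigma'$ with $\Sigma'=\sum_j a_jv_j$, and use $\Sigma^2=v_0\cdot\Sigma=v_0^2+v_0\cdot\Sigma'$. The $v_0^2$ terms cancel, so
\[
A([K_0,\emptyset])=\tfrac12\bigl(K_0(\Sigma')+v_0\cdot\Sigma'\bigr)=\tfrac12\sum_{j\ge1}a_j\,(v_j^2+2+v_j\cdot v_0).
\]
This is independent of the auxiliary framing chosen on $v_0$. That independence is a useful consistency check and allows any convenient framing in each case.

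\emph{Step 2: behaviour under a blow-up.} Model each blow-up as $X_{G_{v_0}}\#\overline{\mathbb{CP}}^2$ with exceptional class $e$, where $e^2=-1$ and $K_0(e)=e^2+2=1$.
\begin{itemize}
\item Each blown-up curve $v$ has proper transform $\tilde v=v-e$. This covers one or two weighted vertices in types II and III, the unweighted vertex in type IV, and one of each in type V.
\item The total transform $\pi^*v=\tilde v+e$ has the same intersection numbers as $v$ and is orthogonal to $e$.
\item The anticanonical class transforms as $\tilde K_0=\pi^*K_0-e^*$, because $K_{can}$ pulls back plus $E$. One checks directly that $\tilde K_0(\tilde v)=\tilde v^2+2$ on the new vertices.
\end{itemize}
The candidate new class is then $\tilde\Sigma=\pi^*\Sigma+c\,e$, where $c$ is fixed by requiring $\tilde\Sigma\cdot w=0$ for every new weighted vertex, including $e$ itself when $e$ is weighted.

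In types I, II and III the vertex $v_0$ is untouched and $\pi^*\Sigma$ already satisfies all orthogonality conditions, so $c=0$. Then $\tilde K_0(\tilde\Sigma)=K_0(\Sigma)$ and $\tilde\Sigma^2=\Sigma^2$, so $A$ is unchanged. Type I reduces to the connected sum formula of Subsection \ref{subsec:connsum}: the disjoint $-1$ vertex contributes Alexander grading $0$.

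\emph{Step 3: the main obstacle, types IV and V.} Here $v_0$ itself is blown up. The new distinguished vertex is $\tilde v_0=v_0-e$, and the coefficient of $\tilde v_0$ in $\tilde\Sigma$ must be $1$. One must also impose $e\cdot\tilde\Sigma=0$, since $e$ is now a weighted $(-1)$-vertex. Writing $\tilde\Sigma=\tilde v_0+\pi^*\Sigma'+c\,e$, I would solve for $c$, expecting $c=1$ so that $\tilde\Sigma=\pi^*\Sigma$.

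I would then evaluate $\tfrac12(\tilde L(\tilde\Sigma)+\tilde\Sigma^2)$ directly, using $\tilde L(\tilde v_0)=-\tilde v_0^2$ rather than the $\pi^*$ formula, because $v_0$ carries no canonical framing. The key check is that the contributions $\tilde K_0(e)=1$ and $\tilde v_0\cdot e=1$ are exactly compensated by the change $\tilde v_0^2=v_0^2-1$ and the shift in $L(v_0)$. For type V there is an additional correction from $\tilde v=v-e$, and the same bookkeeping should show that it cancels.

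I expect this sign and term bookkeeping in types IV and V to be the delicate step. To stay safe, I would carry it out with the formula $\tfrac12\sum_j a_j(v_j^2+2+v_j\cdot v_0)$ from Step 1, listing the changed coefficients $a_j$ explicitly.
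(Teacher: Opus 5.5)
Your Steps 1 and 2 are sound. For $E=\emptyset$ the conditions $a_{v_0}=b_{v_0}=0$ do force $L(v_0)=-v_0^2$. Writing the new class as $\pi^*\Sigma$ is a cleaner packaging of the paper's coefficient computations for types I--III ($a'_n=a_k$, resp.\ $a'_n=a_{k_1}+a_{k_2}$).

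The gap is in Step 3, and more careful bookkeeping will not close it: the cancellation you anticipate does not occur in your set-up. Your own Step 1 formula already shows this. It does not see the framing of $v_0$ at all, so the change $\tilde v_0^2=v_0^2-1$ cannot compensate anything. In type IV the new vertex gets $a'_n=1$ and contributes $\tfrac12 a'_n(e^2+2+e\cdot v_0)=1$. In type V it gets $a'_n=a_k+1$ and contributes $a_k+1$, while the $v_k$-term drops from $\tfrac12 a_k(v_k^2+3)$ to $\tfrac12 a_k(v_k^2+1)$. So $A$ increases by exactly $1$ in both cases.

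The same conclusion follows directly. On one side, $\tilde\Sigma^2=\Sigma^2$, because the framing drop on $v_0$ is used up against the term $a'_n\,\tilde v_0\cdot e$. On the other side, $\tilde L(\tilde\Sigma)=L(\Sigma)+2$: one unit comes from $\tilde K_0$ on the new and modified weighted vertices. The other comes from $\tilde L(\tilde v_0)=-\tilde v_0^2=L(v_0)+1$, which moves in the wrong direction. The smallest instance: a single unweighted vertex gives $A([K_0,\emptyset])=0$. After a type IV blow-up, $v_0$ joined to a $(-1)$-vertex $e$ gives $\Sigma=v_0+e$ and $A([K_0,\emptyset])=\tfrac12\bigl((1-v_0^2)+(v_0^2+1)\bigr)=1$.

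The paper gets invariance in types IV and V through a different convention. It evaluates $L_0(v'_0)=2+(v'_0)^2$ with $(v'_0)^2=v_0^2-1$. In other words, it gives $v_0$ the anticanonical value and lowers its auxiliary framing exactly as for a weighted vertex. Then $L_0$ drops by one on $\tilde v_0$ and cancels $K_0(e)=1$; this is the $\pi^*L-e^*$ rule you chose to set aside. In that convention
$A([K_0,\emptyset])=1+v_0^2+\tfrac12\sum_j a_j(v_j^2+2+v_j\cdot v_0)$,
which depends on the framing of $v_0$. The decrease of $v_0^2$ by one is precisely what offsets the $+1$ found above.

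Hence your plan cannot be completed as written. With the extension $L(v_0)=-v_0^2$ that you correctly read off from the stated definition, the Alexander grading of $[K_0,\emptyset]$ is not invariant under types IV and V. Reproducing the paper's conclusion requires switching to $L_0(v_0)=v_0^2+2$ together with the framing-lowering rule for $v_0$, which means giving up your framing-independence check.
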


\begin{proof}
    The proof will go through the five different types of blow-ups.
    
    Let us denote by $L_0$ the extension of $K_0$ to $\Char(\Gvo)$ in the definition of the Alexander grading. Consider $\Sigma=v_0+\sum_{j=1}^{n-1} a_j\cdot v_j$ the homology class defined previously on the original graph $\Gvo$, and $\Sigma'=v'_0+\sum_{j=1}^n a'_j\cdot v'_j$ on the blown-up graph $\Gvo'$, \ie $v_n=e$ is the new vertex with $v^2_n=-1$ (we will write $v_n$ instead of $v'_n$ as there is not confusion for this vertex).

In the case of a generic blow-up (the disjoint union of our original graph $\Gvo$ with a $-1$-framed vertex), we immediately see that all the coefficients $a'_j=a_j$ for $j=1,\dots,n-1$, and $a_n=0$. This implies that $(\Sigma')^2=\Sigma^2$ and that $L_0(\Sigma')=L_0(\Sigma)$, and in turn that $A(K_0)$ is unchanged after generic blow-up.
    
Suppose now that we blow up at the weighted vertex $v_k$. We will see that $\Sigma'=\Sigma+a_k\cdot v_n$ (in other words, the coefficient of the new vertex in the homology class is equal to the coefficient of the blown-up vertex). Indeed, for any $j\neq 0,k,n$, the condition that $v'_j\cdot \Sigma'=0$ will imply that the coefficients $a'_j=a_j$ for any $j\neq k,n$. For the last coefficients, we first have $v_n\cdot\Sigma'=a'_k(v_n v_k)+a'_n v^2_n=a'_k-a'_n=0$ and so $a'_n=a'_k$. To see that the coefficient of $v_k$ is left unchanged, recall that the framing of $v_k$ in $\Gvo'$ is one less than its framing in $\Gvo$.
This means that
    \begin{align*}
        v'_k\cdot\Sigma'=0&\iff \sum_{v'_j \text{ incident to } v'_k}a'_j(v'_k v'_j) +a'_k(v'_k)^2+a_n(v'_k v_n)=0\\
        &\iff \sum_{v_j \text{ incident to } v_k}a_j(v_k v_j) +a'_k(v^2_k-1)+a_n(v_k v_n)=0\\
        &\iff \sum_{v_j \text{ incident to } v_k}a_j +a'_k(v^2_k-1)+a'_k=0\\
        &\iff \sum_{v_j \text{ incident to } v_k}a_j +a'_k v^2_k=0\\
        &\iff v_k\cdot\Sigma=0
    \end{align*}
so the conditions on the coefficient $a_k$ are unchanged after the blow-up, such that $a'_k=a_k$. Now, we first notice that $(\Sigma')^2=v_0\cdot\Sigma'=v_0\cdot\Sigma=\Sigma^2$ since the new vertex $v_n$ is not incident to $v_0$, and that
\begin{align*}
        L_0(\Sigma')&=L_0(v_0)+\sum_{j\neq k,n}a'_j(2+(v'_j)^2)+a'_k(2+(v'_k)^2)+a'_n(2+(v'_n)^2)\\
        &=L_0(v_0)+\sum_{j\neq k,n}a_j(2+v^2_j)+a_k(2+(v^2_k -1))+a_n(2+v^2_n)\\
        &=L_0(v_0)+\sum_{j\neq k,n}a_j(2+v^2_j)+a_k(1+v^2_k)+a_k\\
        &=L_0(\Sigma)
\end{align*}
which shows that the Alexander grading of $K_0$ is left unchanged after this blow-up.

The proof for the blow-up of an edge connecting two weighted vertices goes similarly. If $v_{k_1}$ and $v_{k_2}$ are two such vertices, then we will see that the homology class is given by $\Sigma'=\Sigma+(a_{k_1}+a_{k_2})v_n$. By the same argument as before, all the coefficients $a'_j=a_j$ for $j\neq k_1, k_2, n$, and the condition $v_n\cdot\Sigma'=0$ implies that $a_{k_1}(v_n v_{k_1})+a_{k_2}(v_n v_{k_2})+a_n v^2_n=a_{k_1}+a_{k_2}-a_n=0$, so that the coefficient of the new vertex is equal to $a_{k_1}+a_{k_2}$. Besides, considering $v_{k_1}$ (and similarly for $v_{k_2}$), remembering that the vertices $v_{k_1}$ and $v_{k_2}$ are incident in $\Gvo$ but not in $\Gvo'$, we have

\begin{align*}
    v'_{k_1}\cdot\Sigma'&=\sum_{v'_j \text{ incident to } v'_{k_1}}a'_j(v'_{k_1} v_j) +a'_{k_1}(v'_{k_1})^2\\
    &=\sum_{v'_j \text{ incident to } v'_{k_1}}a'_j +a'_{k_1}(v^2_{k_1}-1)\\
    &=\sum_{\substack{v'_j \text{ incident to } v'_{k_1}\\j\neq n}}a'_j +a_n+a'_{k_1}(v^2_{k_1}-1)\\
    &=\sum_{\substack{v_j \text{ incident to } v_{k_1}\\j\neq n}}a_j-a_{k_2} +a_n+a'_{k_1}(v^2_{k_1}-1)\\
    &=\sum_{\substack{v_j \text{ incident to } v_{k_1}\\j\neq n}}a_j +a_{k_1}+a'_{k_1}(v^2_{k_1}-1).
\end{align*}

Since $v_j\cdot\Sigma=0$ for every $j\neq 0,n$ we have that
\[
\sum_{\substack{v_j \text{ incident to } v_{k_1}\\j\neq n}}a_j +a_{k_1}v^2_{k_1}=0.
\]
This means that the condition $v'_{k_1}\cdot\Sigma'=0$ is equivalent to $(a'_{k_1}-a_{k_1})(v^2_{k_1}-1)=0$, so that the coefficients $a'_{k_1}=a_{k_1}$ (and similarly $a'_{k_2}=a_{k_2}$). Finally, we see that $(\Sigma')^2=v_0\cdot\Sigma'=v_0\cdot\Sigma=\Sigma^2$, and
\begin{align*}
        L_0(\Sigma')&=L_0(v_0)+\sum^{n-1}_{\substack{j=1\\j\neq k_1,k_2}}a_j(2+v^2_j)+a_{k_1}(2+(v^2_{k_1} -1))+a_{k_2}(2+(v^2_{k_2} -1))+a_n(2+v^2_n)\\
        &=L_0(v_0)+\sum^{n-1}_{\substack{j=1\\j\neq k_1,k_2}}a_j(2+v^2_j)+a_{k_1}(1+v^2_{k_1})+a_{k_2}(1+v^2_{k_2})+(a_{k_1}+a_{k_2})\\
        &=L_0(v_0)+\sum^{n-1}_{j=1}a_j(2+v^2_j)\\
        &=L_0(\Sigma)
\end{align*}
so we are done for this case.

Now we will consider the case of a blow-up at the distinguished vertex $v_0$, and for more comprehensible computations, we will denote by $v'_0$ this vertex in $\Gvo'$; this means that for a framing $v^2_0$ on $v_0$, then $v'_0$ has framing $v^2_0 -1$. As before, the coefficients $a'_j=a_j$ will remain unchanged for $j\neq n$, and we find $a'_n$ by
    \begin{align*}
        v_n\cdot\Sigma'=v_nv'_0+a'_nv^2_n=1-a'_n=0
    \end{align*}
and so $a'_n=1$.
    We can now compute on one hand
    \begin{align*}
        (\Sigma')^2=v'_0 \cdot\Sigma'&=\sum_{v'_j \text{ incident to } v'_0} a'_j(v'_0 v'_j)+(v'_0)^2\\
        &=\sum_{v'_j \text{ incident to } v'_0} a_j+(v^2_0-1)\\
        &=\sum_{v_j \text{ incident to } v_0} a_j+a'_n+(v^2_0-1)\\
        &=\sum_{v_j \text{ incident to } v_0} a_j+v^2_0\\
        &=v_0 \cdot\Sigma=\Sigma^2,
    \end{align*}
    while
    \begin{align*}
        L_0(\Sigma')&=\sum_{j=1}^n a'_j(2+(v'_j)^2) + (2+(v'_0)^2)\\
        &=\sum_{j=1}^{n-1} a'_j(2+(v'_j)^2)+a'_n(2+v^2_n) + (2+v^2_0-1)\\
        &=\sum_{j=1}^{n-1} a_j(2+v^2_j)+1 + (1+v^2_0)\\
        &=\sum_{j=1}^{n-1} a_j(2+v^2_j) + (2+v^2_0)\\
        &=L_0(\Sigma)
    \end{align*}
    and so we are done.

    For the final case, namely the blow-up of an edge connecting the distinguished vertex $v_0$ with a weighted vertex $v_k$, we can see that the coefficient $a_n$ of the new vertex is equal to $a_k +1$. Indeed, we have the following equality $v_n \cdot\Sigma'=1+a_k+a_n v^2_n=0$, and like previously, the other coefficients are left unchanged.
    So we can also compute in this case
    \begin{align*}
        (\Sigma')^2=v'_0 \cdot\Sigma'&=\sum_{v_j \text{ incident to } v'_0 \text{ in } \Gvo'} a_j(v'_0 v_j)+(v'_0)^2+a_n (v'_0 v_n)\\
        &=\sum_{v_j \text{ incident to } v'_0 \text{ in } \Gvo'} a_j(v'_0 v_j)+(v^2_0 -1)+(a_k +1)\\
        &=\sum_{v_j \text{ incident to } v'_0 \text{ in } \Gvo'} a_j(v'_0 v_j)+v^2_0+a_k\\
        &=\sum_{v_j \text{ incident to } v_0 \text{ in } \Gvo} a_j(v_0 v_j)+v^2_0\\
        &=v_0 \cdot\Sigma=\Sigma^2
    \end{align*}
    since $v_k$ is incident to the distinguished vertex in $\Gvo$ but not in $\Gvo'$, and
    \begin{align*}
        L_0(\Sigma')&=2+(v'_0)^2+\sum^{n}_{j=1}a'_j(2+(v'_j)^2)\\
        &=2+(v'_0)^2+\sum^{n}_{\substack{j=1\\j\neq k,n}}a_j(2+v^2_j)+a'_k(2+(v'_k)^2)+a'_n(2+(v'_n)^2)\\
        &=1+v^2_0+\sum^{n}_{\substack{j=1\\j\neq k,n}}a_j(2+v^2_j)+a_k(1+v^2_k)+(a_k+1)\\
        &=2+v^2_0+\sum^{n-1}_{j=1}a_j(2+v^2_j)\\
        &=L_0(\Sigma).
\end{align*}
\end{proof}

\begin{remark}
    We can use the results of \cite{OSS14a} to see that if two plumbing graphs $\gammavo$ and $\Gamma'_{v'_0}$ represent the same pair $(Y,L)$ and are related by blow-ups of type I, IV and V exclusively, then the element $\leg$ is an invariant of the pair $(Y,L)$. First, following on the conventions in \cite{OSS14a}, denote by $\Gamma^d_{v_0}$ the graph we obtain from $\gammavo$ by adding a vertex with framing $-1$ without any edges, and denote by $\Gamma^+_{v_0}$ the same graph with an edge from the new vertex to $v_0$. In other words, the first graph corresponds to a blow-up of type I, while the second corresponds to a type IV. We have that the master complexes of $\Gamma^+_{v_0}$, $\Gamma^d_{v_0}$, and of the original graph $\gammavo$ are equal.
    
    As for the type V blow-up, we can make use of the filtered chain homotopies given in the Appendix of \cite{OSS14a} between the two filtered chain complexes.
\end{remark}

\nocite{*}
\bibliographystyle{plain}
\bibliography{references}
\bigskip
\noindent\textit{E-mail address}: \texttt{zampa.sarah@renyi.hu}

\bigskip
\textsc{Department of Geometry and Algebra, Institute of Mathematics, Budapest University of Technology and Economics, Műegyetem rkp. 3., H-1111 Budapest, Hungary}\par
\textsc{Department of Algebraic Geometry and Differential Topology, HUN-REN Alfréd R\'{e}nyi Institute of Mathematics, Reáltanoda u. 13-15., H-1053 Budapest, Hungary}

\end{document}